\newtheorem{theorem}{Theorem}
\newtheorem{proposition}{Proposition}
\theoremstyle{definition}
\newtheorem{definition}{Definition}
\newtheorem{remark}{Remark}
\newtheorem{corollary}{Corollary}
\definecolor{green}{rgb}{0.0, 0.75, 0.25} 
\newcommand{\fk}{\color{green}} 
\def\eea{\end{eqnarray}}
\renewcommand{\emph}{\textit}
\DeclareMathOperator{\inter}{int}
\DeclareMathOperator{\bd}{bd}
\title[A mono-monostatic polyhedron with point masses at the 21 vertices]{A mono-monostatic polyhedron with point masses at the 21 vertices}
\author{Gábor Domokos} \email{domokos@iit.bme.hu}
\address{MTA-BME Morphodynamics Research Group / BME Department of Mechanics and Structures, Budapest University of Technology and Economics, Műegyetem rkp. 3., Budapest, Hungary, 1111}
\author{Flórián Kovács} \email{kovacs.florian@emk.bme.hu}
\address{MTA-BME Morphodynamics Research Group / BME Department of Structural Mechanics, Budapest University of Technology and Economics, Műegyetem rkp. 3., Budapest, Hungary, 1111}
\thanks{Support of the NKFIH Hungarian Research Fund Grant No. 134199 and Grant No. TKP2021 BME-NVA is kindly acknowledged. }
\subjclass[2010]{52B10, 77C20, 52A38}
\keywords{polyhedron, static equilibrium, monostatic polyhedron}
\date{}
\begin{document}

\begin{picture}(0,0)(0,0)
 \put(20,20){{\fk (Accepted for publication in the American Mathematical Monthly)}}
\end{picture}

\title{Conway's spiral and a discrete Gömböc with 21 point masses}

\begin{abstract}
We show an explicit construction in 3 dimensions for a convex, mono-monostatic polyhedron (i.e., having exactly one stable and one unstable equilibrium) with 21 vertices and 21 faces. This polyhedron is a 0-skeleton, with equal masses located at each vertex. The above construction serves as an upper bound for the minimal number of faces and vertices of mono-monostatic 0-skeletons and complements the recently provided lower bound of 8 vertices. This is the first known construction of a mono-monostatic polyhedral solid. We also show that a similar construction for homogeneous distribution of mass cannot result in a mono-monostatic solid.
\end{abstract}

\maketitle

\section{Introduction.}\label{sec:intro}

Dice have been used since millennia to generate random integers \cite{dice}. The most common geometric form of a dice is a convex polyhedron.
Throwing dice is a mechanical experiment executed on a horizontal plane, and in the experiment we select randomly from among the \emph{stable equilibrium points} lying on the faces of the polyhedron. Dice are called \emph{fair} if the probabilities to rest on any face (after a random throw) are equal  \cite{Diaconis}, otherwise they are called \emph{loaded} \cite{DawsonFinbow}. Despite being associated with mechanical experiments with solids,
the concept of static equilibrium may also be defined on polyhedra in purely geometric terms \cite{balancing}:
\begin{definition}\label{def1}
Let $P$ be a $3$-dimensional convex polyhedron, let $\inter P$ and $\bd P$ denote its interior and boundary, respectively and let $o \in \inter P$.  Let $P$ be associated with some mass distribution $\mu(P)$. Then we say that the pair $(P,o)$ is a \emph{polyhedral solid} if $o$ coincides with the center of mass $c$ of $\mu(P)$. We call $q \in \bd P$ an \emph{equilibrium point} of $P$ with respect to $o$ (or, alternatively, an equilibrium point of the polyhedral solid $(P,o)$) if the plane $H$ through $q$ and perpendicular to $[o,q]$ supports $P$ at $q$. In this case $q$ is \emph{nondegenerate} if $H \cap P$ is the (unique)  vertex, edge, or face of $P$, respectively, that contains $q$ in its relative interior. A nondegenerate equilibrium point $q$ is called \emph{stable, saddle-type} or \emph{unstable}, if $\dim (H \cap P) = 2,1$ or $0$, respectively.
\end{definition}

\begin{remark}
Throughout this paper we will briefly refer to polyhedral solids as polyhedra.
\end{remark}
\begin{remark}
The definition for static equilibria of convex polygons and polygonal solids is analogous; however, in that case we only distinguish between (generic) stable equilibrium points in the interior of the edges and unstable equilibrium points at the vertices.
\end{remark}
Throughout this paper we deal only with nondegenerate equilibrium points with respect to the center of mass of polyhedral solids; thus, we have $o=c$, in which case equilibrium points gain intuitive interpretation as locations on $\bd P$
where $P$ may be balanced if it is supported on a horizontal surface (identical to the \emph{support plane} mentioned in Definition \ref{def1}) without friction in the presence of uniform gravity.  
\begin{definition}\label{def:skeleton}
Polyhedral solids with some special material distributions are defined as \emph{polyhedral $h$-skeletons} as follows: 0-skeletons have mass uniformly distributed on their vertices,  1-skeletons have mass uniformly distributed on the edges, 2-skeletons have mass uniformly distributed  on the faces, whereas 3-skeletons have uniform density. 3-skeletons are also referred to as \emph{homogeneous polyhedra}. In two dimensions, we use the term \emph{polygonal $h$-skeleton} with $h=0,1,2$ only and 2-skeletons are also referred to as \emph{homogeneous polygons}.
\end{definition}

As discussed in Definition \ref{def1}, equilibrium points may belong to three stability types: faces may carry stable equilibria, vertices may carry unstable equilibria, and edges may carry saddle-type equilibria. Denoting their respective numbers by $S,U,H$, by the Poincar\'e-Hopf formula \cite{Milnor, Langi} for a convex  polyhedral solid one obtains the following relation:
\begin{equation}\label{Poincare}
S+U-H=2,
\end{equation}
implying that any two of these numbers determine the third. We will refer to $(S,U)^E$ as the (primary) \emph{equilibrium class} of the polyhedron $P$
\cite{VarkonyiDomokos, balancing}. Analogously, we denote respective numbers of faces, vertices, and edges of $P$ by $F,V,E$ and for these the Euler
formula 
\begin{equation}\label{Euler}
F+V-E=2,
\end{equation}
holds, defining the combinatorial class $(F,V)^C$ of the polyhedron. In two dimensions, for convex polygons $S=U$ and $F=V$ always hold, hence equilibrium classes may be defined by the number of unstable equilibria as $(U)^E$ and combinatorial classes by the number of vertices as $(V)^C$.

\subsection{Results on homogeneous monostatic polyhedra.}

Polyhedral solids in equilibrium classes $(1,U)^E$ and in classes $(S,1)^E$ are collectively called \emph{monostatic} \cite{balancing}.
\subsubsection{Monostable homogeneous polyhedra}
\begin{definition}\label{monostable}
Polyhedral solids in equilibrium classes $(1,U)^E$ are called \emph{monostable} \cite{balancing}.
We denote the smallest number of faces among all monostable, convex, homogeneous  polyhedra by $F^S$ and we denote the smallest number of vertices among all monostable, convex, homogeneous  polyhedra by $V^S$.
\end{definition}

While monostable, homogeneous polyhedra have attracted considerable mathematical interest, $F^S$ and $V^S$ are not known. On the other hand, some bounds do exist.
In 1967, Conway and Guy \cite{Conway}
offered the first upper bound for $F^S$ and $V^S$ by describing such an object with $F=19$ faces and $V=34$ vertices to which we henceforth refer as the \emph{Conway-Guy polyhedron}. The face and vertex numbers associated with  the Conway-Guy polyhedron were improved by Bezdek \cite{Bezdek} to $(18,18)^C$ and later by Reshetov \cite{Reshetov} to $(14,24)^C$. These values of $F$ and $V$ determine the best known \emph{upper bounds} for a homogeneous monostable polyhedron, so we have  $F^S \leq 14 , V^S \leq 18$. Even less is known about the lower bounds: the only known result is due to Conway \cite{Dawson} who proved that each homogeneous tetrahedron has at least two stable equilibria, from which $F^S, V^S \geq 5$ follows.

\subsubsection{Mono-unstable  homogeneous polyhedra}

\begin{definition}\label{monounstable}
Polyhedral solids in equilibrium classes $(S,1)^E$ are called \emph{mono-unstable} \cite{balancing}.
We denote the smallest number of faces among all mono-unstable, convex, homogeneous  polyhedra by $F^U$ and we denote the smallest number of vertices among all mono-unstable, convex, homogeneous  polyhedra by $V^U$.
\end{definition}

The Conway-Guy polyhedron has, beyond the single stable position on one face, $U=4$ unstable equilibria at the $4$ vertices of the very same face. The first example for a mono-unstable polyhedron  was demonstrated in \cite{balancing}, having $F=18$ faces and $V=18$ vertices and in the same paper it was proven that a homogeneous tetrahedron cannot be mono-unstable. Thus, for the minimal numbers $F^U, V^U$ for the faces and vertices that a homogeneous, mono-unstable polyhedron may have, the following bounds apply:
 $ 5 \leq F^U \leq 18$, $5 \leq V^U \leq  18$.
 
\subsubsection{Mono-monostatic homogeneous polyhedra}
\begin{definition}\label{monomono}
Polyhedral solids in equilibrium class $(1,1)^E$ are called \emph{mono-monostatic} \cite{balancing,VarkonyiDomokos}.
We denote the smallest number of faces among all mono-monostatic, convex, homogeneous  polyhedra by $F^{\star}$ and we denote the smallest number of vertices among all mono-monostatic, convex, homogeneous  polyhedra by $V^{\star}$.
\end{definition}

While the existence of homogeneous, mono-monostatic polyhedra has been proven \cite{Langi}, no example is known. The only known convex, homogeneous, mono-monostatic objects are non-polyhedral,  the first example is called G\"{o}mb\"{o}c \cite{VarkonyiDomokos}. This implies that for the minimal numbers $F^{\star}, V^{\star}$ for the faces and vertices of a homogeneous mono-monostatic polyhedron the only known bounds are $F^{\star}, V^{\star} \geq 5$.

\subsection{0-skeletons and the main result.}
Here we highlight a new aspect of this problem: instead of looking at the homogeneous case with uniform mass distribution, we consider polyhedral 0-skeletons with unit masses at the vertices as introduced in \cite{Dawson3}.
\begin{remark} \label{rem_h0_h}
Definitions \ref{monostable}, \ref{monounstable} and \ref{monomono} of the monostable, mono-unstable and mono-monostatic properties apply to all polyhedra, regardless of their mass distribution. In particular, it applies both to homogeneous polyhedra and to polyhedral 0-skeletons.
\end{remark}
Here again we seek the minimal face and vertex numbers to obtain monostatic polyhedra and the corresponding numbers are defined below.
 \begin{definition}\label{0skeleton}
We denote the smallest number of faces among all monostable (mono-unstable, mono-monostatic), convex polyhedral 0-skeletons by $F_0^S$ ($F_0^U, F_0^{\star})$, respectively.  We denote the smallest number of vertices  among all monostable (mono-unstable, mono-monostatic), convex polyhedral 0-skeletons by $V_0^S$ ($V_0^U, V_0^{\star})$, respectively.
\end{definition} 
The problem of finding these minima may appear, at first sight, almost `unsportingly' easy as compared with the homogeneous case. However, the minimal vertex number $V^{\star}_0$ and face number $F^{\star}_0$  to produce  a mono-monostatic polyhedral 0-skeleton are not known. Even more curiously, the minimal number of vertices for a mono-monostatic,  \emph{polygonal} 0-skeleton (in 2 dimensions) is not known either.

The first related results have been reported in \cite{Bozoki} where $V^{U}_0 \geq 8$ was proven (implying, via the theorem of Steinitz \cite{Steinitz1}, the lower bound $F^{U}_0 \geq 6$). This result also implies the bounds $F^{\star}_0 \geq 6, V^{\star}_0 \geq 8$
for mono-monostatic polyhedral 0-skeletons.

In this paper we explain the background and show some constructions which may inspire further research. In particular, by providing an explicit construction of a mono-monostatic polyhedral 0-skeleton with 21 faces and 21 vertices, we prove

\begin{theorem}\label{th1}
$F^{\star}_0,  V^{\star}_0 \leq 21.$
\end{theorem}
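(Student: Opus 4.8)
The plan is to write down an explicit set of $21$ points $P_1,\dots,P_{21}\in\mathbb R^3$ in convex position, put a unit mass at each, and show that their convex hull $P=\conv\{P_1,\dots,P_{21}\}$, balanced about the centroid $c=\tfrac1{21}\sum_k P_k$, has exactly one stable and exactly one unstable equilibrium. I would use the standard classification of balance points of a polyhedral $0$-skeleton via the radial distance $\rho(x)=|x-c|$, $x\in\partial P$: a relative-interior point of a face is a (stable) local minimum of $\rho$ exactly when the orthogonal projection of $c$ onto that face's plane falls in the relative interior of the face; a vertex $v$ is an (unstable) local maximum exactly when $v-c$ lies in the normal cone $N_P(v)$; and an edge point, if it is a balance point, is of saddle type. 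As $\partial P\cong S^2$, the Poincar\'e--Hopf relation $S-H+U=2$ holds for the numbers $S,H,U$ of stable, saddle, and unstable balance points of a non-degenerate polyhedron, so once $S=1$ and $U=1$ are shown, $H=0$ follows automatically. Hence it suffices to check, for the chosen points, that (i) all $21$ of them are vertices of $P$ and $P$ has $21$ faces (a finite combinatorial check; then $E=40$ by Euler's formula), (ii) exactly one face catches the foot of the perpendicular dropped from $c$ in its relative interior, and (iii) exactly one vertex $v$ satisfies $v-c\in N_P(v)$.

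\textbf{The construction.} For the point set I would take something in the spirit of Conway's spiral: a nearly rotationally symmetric, monotone spiral placed on (a mild radial perturbation of) the unit sphere, of the form $P_k=r_k\bigl(\sqrt{1-z_k^{\,2}}\cos\varphi_k,\ \sqrt{1-z_k^{\,2}}\sin\varphi_k,\ z_k\bigr)$ with $z_1<z_2<\dots<z_{21}$, the azimuths $\varphi_k$ advancing by a roughly constant increment, and a small number of tunable parameters --- the number of turns, the distribution of the heights $z_k$, the radial factors $r_k$, and possibly a global tilt. The combinatorial type of the hull of such a spiral is a ``twisted drum'' (a bottom face, a top face, and a spiralling side); I would first fix the discrete parameters so that this drum has $V=F=21$, and then adjust the continuous parameters so that $c$ sits slightly off the axis and low enough that precisely one face near the bottom is met perpendicularly from the interior, while precisely one vertex near the opposite end satisfies the normal-cone condition.

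\textbf{Verification.} This is the computational core. With explicit coordinates one computes $c$ and then, for each of the $21$ faces, checks the sign pattern that decides whether the perpendicular foot from $c$ falls inside it --- expecting exactly one ``yes'' --- and, for each of the $21$ vertices, whether $v-c$ lies in the cone spanned by the outward normals of the faces incident to $v$ --- again expecting exactly one ``yes''. Rather than $42$ unrelated inequalities, a cleaner route is to establish a one-parameter \emph{unimodality} statement along the spiral --- for instance, that the angle between $x-c$ and the relevant outward normal is monotone in $k$ except for a single reversal --- so that only the extreme face and the extreme vertex can be balance points; the near-rotational symmetry of the spiral is exactly what makes such a monotonicity argument tractable. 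Feeding the Poincar\'e--Hopf count from the first step then rules out saddle (edge) equilibria, and we conclude that $P$ is a mono-monostatic $0$-skeleton with $V=F=21$, whence $F^\star_0,V^\star_0\le 21$.

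\textbf{Main obstacle.} The crux is a tension between two requirements that pull in opposite directions. Pinning the vertex and face counts at $21$ forces the spiral to wind at a fairly specific rate; but that same rate tends to leave a whole ``belt'' of nearly equivalent bottom faces and a cluster of nearly equivalent top vertices, every one of which is a candidate stable, resp.\ unstable, equilibrium. Cutting the stable set and the unstable set each down to a single element --- the minimum compatible with the $\{1,0,1\}$ Poincar\'e--Hopf type --- requires injecting just enough asymmetry (in the $z_k$, the $\varphi_k$, or the $r_k$) to break all these near-ties without altering the combinatorial type of the hull, and then certifying the resulting finite system of strict inequalities, most safely by interval arithmetic. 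A further subtlety particular to the $0$-skeleton setting is that $c$ is the centroid of the $21$ masses, not of the solid, so geometry and mass distribution are coupled: every nudge of a point moves both $P$ and $c$, which is what makes the parameter search genuinely delicate rather than routine.
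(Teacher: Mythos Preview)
Your proposal is a plausible research plan but not a proof: you never actually exhibit the $21$ points, and the ``main obstacle'' you yourself flag --- breaking the near-ties among a belt of candidate bottom faces and a cluster of candidate top vertices without wrecking the combinatorics --- remains unresolved. The paper sidesteps this obstacle entirely by using \emph{exact} $D_k$ symmetry rather than a single asymmetric spiral. Concretely, it takes one planar Conway spiral $P_0,\dots,P_n$ (defined by the perpendicularity $\angle OP_iP_{i-1}=\pi/2$ at the origin $O$) and replicates it $k$ times about the $z$-axis, with a single shared apex $P_0$; for $(n,k)=(4,5)$ this yields $1+kn=21$ vertices and $21$ faces (one bottom pentagon, three bands of five trapezoids, five apex triangles).

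Exact symmetry forces the centroid $c$ onto the axis, so there is a \emph{unique} candidate stable face (the bottom $k$-gon, whose centre is the foot of the perpendicular from $c$) and a \emph{unique} candidate unstable vertex (the apex): no belt, no cluster, no ties to break, and no reason to push $c$ ``slightly off the axis''. The Conway perpendicularity condition, imposed relative to $O$, rules out equilibria on all side faces and side vertices by construction once $c$ lies below $O$ on the axis. Hence the whole verification collapses to the single scalar inequality $z_C<0$, which the paper secures by writing $z_C$ explicitly in the spiral angles $\alpha_1,\dots,\alpha_n$ and optimizing; at $(n,k)=(4,5)$ the minimum is $z_C\approx-0.015$. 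The idea missing from your approach is precisely this: keep the symmetry and the Conway orthogonality, and the $42$ strict inequalities you were preparing to certify by interval arithmetic become one.
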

Our example, illustrated in Figure \ref{fig:intro}(c) and defined on line 3 of Table~\ref{tb:monomono},  appears to be the first discrete construction of a mono-monostatic object and it may help to inspire thinking about
the bounds  $F^{\star}, V^{\star}$ for the homogeneous case.

The paper is structured as follows. In Section~\ref{spirals} we explain the geometric idea behind Conway's classical construction and how
this idea may be generalized in various directions. In Section~\ref{skeletons}, by relying on an idea by Dawson \cite{Dawson}, we describe the construction for a mono-monostatic 0-skeleton in two dimensions, having $V_0=11$ vertices and then we proceed to prove Theorem \ref{th1} by providing the construction of the mono-monostatic 0-skeleton in three dimensions. In Section~\ref{other} we show the connection to other problems, including the mechanical complexity of polyhedra, and also point out why the particular geometry of our constructions may not be applied to the construction of a homogeneous mono-monostatic polyhedron. In Section~\ref{sum} we draw conclusions.

\section{The geometry of Conway spirals.}\label{spirals}
\begin{figure}[ht]
\begin{center}
\includegraphics[width=\textwidth]{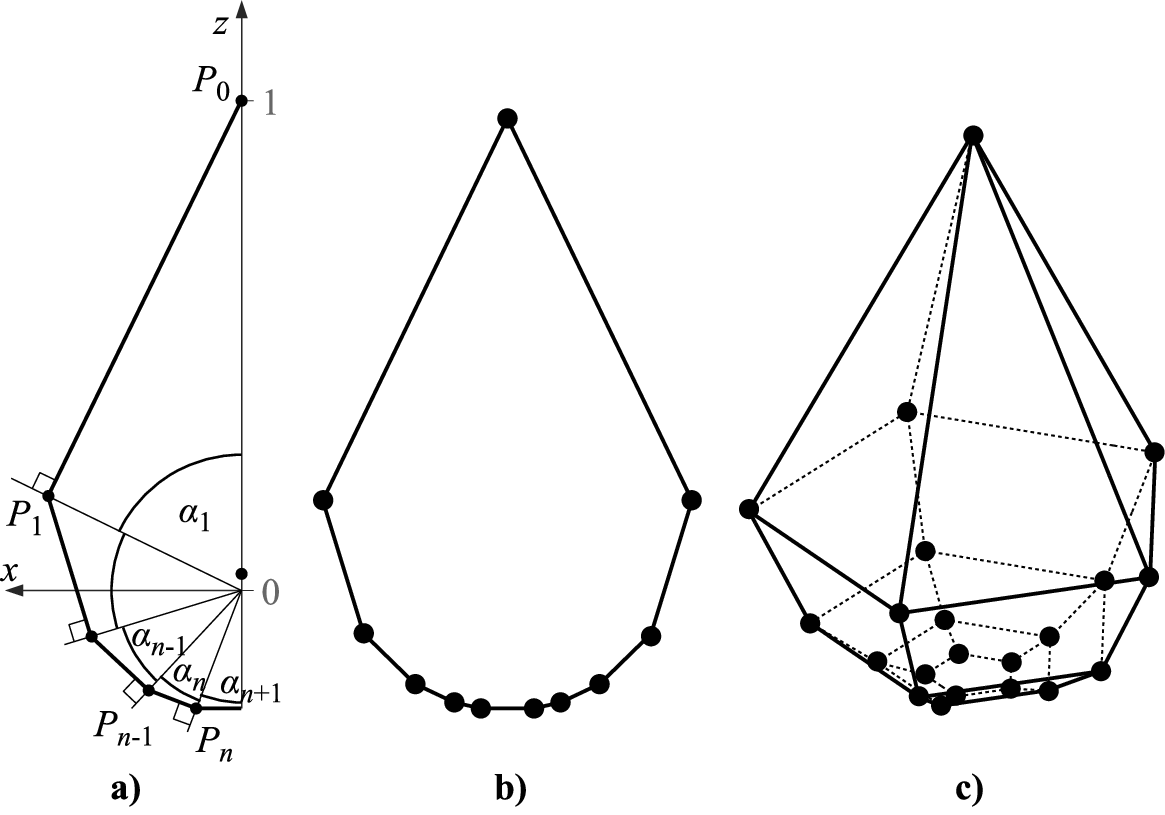}
\caption{Construction of symmetric, mono-monostatic polygons and polyhedra; a) geometry of the Conway spiral $P_0 , \ldots, P_n$.  $P_0$ is fixed at $z=1$ and each radius $OP_i$ is perpendicular to the corresponding edge $P_{i-1}P_i$. The geometry of the spiral is uniquely described in terms of $n$ angular variables $\alpha_1, \ldots, \alpha_n$: $\alpha_i~=~\mbox{\textit{constant}}$ results in a classical Conway spiral; b) 2D mirror-symmetric mono-monostatic polygon with 11 vertices for $n=5$ and $k=2$, see Table~\ref{tb:monomono}, line 6 for numerical data; c) 3D mono-monostatic polyhedron with 5-fold rotational symmetry for $n=4$ and $k=5$, see Table~\ref{tb:monomono}, line 3 for numerical data.}
\label{fig:intro}
\end{center}
\end{figure}

\subsection{The classical Conway double spiral and the Conway-Guy monostable polyhedron.}
The essence of the Conway-Guy polyhedron is a remarkable planar construction to which we will briefly refer as the \emph{Conway spiral}, illustrated in Figure~\ref{fig:intro}(a) and which we define below.
\begin{definition}\label{conway_general}
A Conway $n$-spiral is an open polygonal curve with $n+1$ vertices  $P_0 , \ldots, P_n$ with $\angle O P_i P_{i-1} = \pi/2$, $i = 1\ldots n$,
where the point $O$ is  the origin of the coordinate system, all points $P_i$ with coordinates $(x_i, y_i, z_i)$ lie in the plane $[xz]$, the coordinates of $P_0$ are $(x_0,y_0,z_0)=(0,0,1)$
and all remaining vertices $P_i$, $i=1,2, \dots n$ have positive $x$ coordinates. The central angles  $\alpha_i$ are defined for $i=1,2, \dots n$ as $\alpha _i= \angle P_i O P_{i-1}$ and the central angle $\alpha_{n+1}$ is given as $\alpha_{n+1}=\pi - \sum_{i=1}^n\alpha_i$. We say that the edge $P_iP_{i-1}$ $(i=1,2, \dots n)$ belongs to the \emph{upper part} of the Conway $n$-spiral if and only if $x_i \geq x_{i-1}$.
\end{definition}
This leads to the construction of a special class of convex polygons which we define below.
\begin{definition}
A double Conway $n$-spiral is a convex polygon with $2n+1$ vertices which is obtained by reflecting a Conway spiral
to the $[yz]$ plane. The coordinates for the center of mass $c$ are denoted by $[x_c,y_c,z_c]$.
\end{definition}

Due to reflection symmetry, for double Conway spirals both with homogeneous material distribution and also for 0-skeletons, we have $x_c=y_c=0$. We also note that, due to the special design, the double Conway spiral (both for homogeneous material distribution and for 0-skeletons) is monostatic in the $[xz]$ plane if and only if $z_c<0$. 
The most straightforward construction of a Conway spiral uses uniform central angles:
\begin{definition}
We call a Conway $n$-spiral \emph{classical} if all central angles $\alpha _i= \angle P_i O P_{i-1}$, $(i=1,2, \dots n+1)$ are equal, i.e., we have
\begin{equation}\label{eq:conway}
\alpha _1 = \alpha _2 =\dots =\alpha _{n+1}.
\end{equation}
\end{definition}
In a classical Conway spiral all triangles $P_iP_{i-1}O$ are similar and
the original Conway-Guy polyhedron \cite{Conway} relies on a classical double  Conway spiral.
Classical Conway spirals form  a discrete family of open polygons, parametrized by the integer $n$, and  this also holds for classical double Conway spirals. None of these polygons associated with homogeneous material distribution is monostatic in the $[xz]$ plane, i.e., we have $z_c>0$ for all values of $n$,
since convex monostatic, homogeneous polygons do not exist \cite{DomokosRuina}. Still, the classical double Conway spiral may be regarded as a \emph{best shot} at a homogeneous, monostatic polygon with reflection symmetry. The same intuition suggests that a classical double Conway spiral may need minimal added `bottom weight' to become monostatic.

Conway and Guy added this bottom weight by extending the shape into 3D as an oblique prism and they computed the minimal value of $n$ necessary to make
this homogeneous oblique prism (with the cross-section of a classical Conway spiral) monostable as $n=8$, resulting in a homogeneous, convex polyhedron with 34 vertices and 19 faces.

\subsection{The exponential Conway double spiral and Dawson's monostable simplices in higher dimensions.}
The idea of the Conway spiral may be generalized to bear more fruit. In \cite{Dawson} Dawson, seeking monostatic simplices in higher dimensions, considered the following version:
\begin{definition}
We call a Conway spiral \emph{exponential} if the central angles $\alpha _i= \angle P_i O P_{i-1}$, $(i=1,2, \dots n+1)$ are given by
\begin{equation}\label{eq:dawson}
\alpha_i = b^{1-i}\alpha _1, \quad i=1,2, \dots n \qquad\mbox{and}\qquad \alpha_{n+1}=\alpha_n
\end{equation}
and we refer to this open polygon as an exponential Conway $n$-spiral with parameter $b$.
\end{definition}
Dawson considered the double exponential Conway spiral with $2n+1$ vertices embedded into a  $2n$-dimensional space,  with the mirror images of the vertex $P_i$ defined as $P_{-i}$. He regarded the vertex vectors $\mathbf{x}_i=OP_i$, $i=-n, -n+1 \dots n$ of the exponential double Conway spiral as the \emph{face vectors} of a $2n$-dimensional simplex. (Face vectors $\mathbf{x}_i$ $(i=1,2, \dots F)$ may be associated with any polyhedron having $F$ faces $f_i$, $(i=1,2,\dots F)$ in the following manner: $\mathbf{x}_i$ is orthogonal to the face $f_i$ and has magnitude proportional to the area of $f_i$). Since in this case these vectors are coplanar, the simplex is degenerate (infinite). However, Dawson also added a small, generic $2n$-dimensional perturbation to the coordinates of the vertices of the spiral, to obtain the set of face vectors for a generic $2n$-dimensional simplex. To qualify as face vectors, any set of vectors must be balanced \cite{Minkowski}, i.e., we must have 
\begin{equation}\label{eq:sum}
    \sum_{i=-n}^{n}\mathbf{x}_i=0.
\end{equation}
 Dawson  proved \cite{Dawson} that for a homogeneous simplex (or a simplex interpreted as a 0-skeleton) supported on a horizontal plane, in the presence of gravity, the condition for tipping from face $f_i$ to $f_j$ can be written as
 \begin{equation}\label{Dawson1}
\lvert\mathbf{x}_i\rvert < \lvert\mathbf{x}_j\rvert\cos\theta_{ij},
\end{equation}
where $\theta_{ij}$ is the angle between $\mathbf{x}_i$ and $\mathbf{x}_j$. By using this 
\emph{tipping condition} he found that for $n=5, b=1.5$ the exponential Conway spiral (\ref{eq:dawson})  yields a set of balanced vectors
in the $[xz]$ plane, the small
generic, 10-dimensional (`out of plane') perturbation of which defines a 10-dimensional, homogeneous monostable simplex.

\section{Mono-monostatic 0-skeletons.}\label{skeletons}
\subsection{Double Conway spirals and planar 0-skeletons.}\label{ss:11gon}
Instead of considering double Conway spirals as homogeneous polygons, we can regard them as planar \emph{polygonal 0-skeletons}. Since there are relatively many vertices with negative $z$ coordinate and relatively few ones with positive $z$ coordinate, this interpretation appears to be a convenient manner to
add `bottom weight' to the double Conway spiral. In this interpretation as planar 0-skeletons, one may ask whether mono-monostatic double Conway spirals exist and, if yes, what is the minimal number of their vertices necessary to have this property. Dawson's result leads to the following

\begin{proposition}\label{prop1}
Mono-monostatic polygonal 0-skeletons with $V=11$ vertices exist. 
\end{proposition}
\begin{proof}
We will prove the proposition by showing that a 0-skeleton generated by the exponential double Conway 5-spiral with parameter $b=1.5$ is mono-monostatic. 
Since static balance equations for such a 0-skeleton coincide with (\ref{eq:sum}) and, based on the results presented in \cite{Bozoki}, the tipping condition (\ref{Dawson1}) is equivalent to prohibit an unstable equilibrium at vertex $v_i$, we can see that Dawson's geometric construction, the 0-skeleton generated by the  exponential double Conway 5-spiral with parameter 1.5, has $z_c<0$ and thus it defines a polygon  with $V=11$ vertices which is mono-monostatic. 
\end{proof}

One can ask whether this construction is optimal in two ways: whether there exists a smaller value of $n$ which defines a mono-monostatic 0-skeleton generated by the exponential double Conway spiral
and whether by keeping $n=5$, one may pick other values for $\alpha_i$ which
yield a center of mass with larger negative coordinate. The first question was answered in \cite{Dawson3} in the negative by proving that monostable simplices in $d<9$  dimensions do not exist. This implies that for $n<5$ no mono-monostatic 0-skeleton generated by a Conway spiral  exists, but nothing is known about the existence of mono-monostatic 10-gonal polygons as 0-skeletons since they cannot be represented by a symmetric double Conway spiral. To answer the second question we will use Definition \ref{conway_general} and consider general Conway spirals with \emph{arbitrary} $\alpha_i$
(satisfying the condition given in Definition \ref{conway_general}) and optimize this construction to seek the minimum of $z_c$. 
To verify the monostatic property of a given double Conway $n$-spiral, the coordinate $z_c$ of the center of mass  needs to be computed. In terms of coordinates $z_i$, we have from Figure~\ref{fig:intro}(a):
\begin{equation}
\label{eq:z_C}
 z_c = \dfrac{1+k \displaystyle\sum_{i=1}^{n} z_i}{1+kn},
\end{equation}
where $k=2$.
This can be rewritten in terms of variables $\boldsymbol{\alpha} = (\alpha_1 \ldots \alpha_n)$ as follows:
\begin{equation}
\label{eq:z_prodsum}
z_C(\boldsymbol{\alpha}) = \dfrac{1+k \displaystyle\sum_{i=1}^{n} \prod_{j=1}^i \cos\alpha_j\cdot\cos\left(\sum_{j=1}^i \alpha_j\right)}{1+kn},
\end{equation}

\begin{remark} \label{rem_k}
In the next section we will show constructions where formula (\ref{eq:z_C}) will be interpreted for higher values of $k$.
\end{remark}
We performed an optimization for $\boldsymbol{\alpha}$ in the following manner: The function $z_c(\boldsymbol{\alpha})$ can have a minimum if $\frac{\partial z_c(\boldsymbol{\alpha})}{\partial\alpha_i}=0$ for $i=1, \ldots n$.
One can express $\tan(\alpha_n)$ from $\frac{\partial z_c(\boldsymbol{\alpha})}{\partial\alpha_n}=0$ and, recursively, $\alpha_i = \alpha_i (\tan(\alpha_{i+1}), \ldots, \tan(\alpha_n))$ from $\frac{\partial z_c(\boldsymbol{\alpha})}{\partial\alpha_i}=0$. This yields a univariate polynomial equation for $\tan(\alpha_n)$ that can be solved numerically.

Using this algorithm, we found the shape in Figure~\ref{fig:intro}(b) (see Table~\ref{tb:monomono}, line 6 for computed values of $\boldsymbol{\alpha}$). Note that this result is an alternative proof for the existence of monostable $10$-dimensional simplices given by Dawson \cite{Dawson}. 

We remark that a similar optimization process of the Conway spiral is discussed in \cite{Minich} for the homogeneous case.

\subsection{Proof of Theorem \ref{th1}:  Conway $(n,k)$-spirals and mono-monostatic 0-skeletons in 3 dimensions.}\label{ss:21hedron}

\begin{proof}
In the first step of the proof we further generalize the concept of Conway spirals by considering out-of-plane, 3D arrangements,
defining convex polyhedra:

\begin{definition}
Let us consider a Conway $n$-spiral and rotate it around the $z$-axis $k>2$ times by the angle  $\beta=2\pi/k$. Since the vertex $P_0$
is lying on the $z$ axis, this operation generates  $V=kn+1$ vertices. Beyond the edges defined by the  $k$  Conway spirals, we add
$n$ regular $k$-gons in planes parallel to the $[xy]$ plane. This defines a convex polyhedron with $V=kn+1$ vertices,
$E=2nk$ edges, and $F=kn+1$ faces. We call such a polyhedron a Conway $(n,k)$-spiral and briefly denote it by $P_{n,k}$.
\end{definition}

\begin{remark}
Conway double $n$-spirals could be regarded as Conway $(n,2)$-spirals; however, they do not define polyhedra so we
will only use the $(n,k)$-notation for the case where $k>2$.
\end{remark}

If for $k=2$ the Conway double spiral defines  a mono-monostatic polygonal 0-skeleton then we expect that for higher values of $k$ we will obtain mono-monostatic polyhedral 0-skeletons.

The procedure of finding mono-monostatic 0-skeletons generated by Conway $(n,k)$-spirals is also based on (\ref{eq:z_prodsum}) and a similar optimization as introduced for $k=2$. Without expanding the procedure in detail, it should only be noted that mono-monostatic behaviour requires here that equilibrium points must be excluded not only on any of the edges of a Conway spiral but also on any of the faces between two adjacent spirals; the first and last condition is stronger in- and outside the upper part of the $(n,k)$-spiral, respectively.

We performed calculations in search of minimum $z_c$ that lead to different constructions; one of these constructions with $n=4, k=5$, i.e., $P_{4,5}$ is illustrated in Figure~\ref{fig:intro}(c).

Table~\ref{tb:monomono} summarizes the possible mono-monostatic objects with minimum required $k$ found by the above method ($v = kn+1$ stands both for the number of vertices or/and faces).
\begin{table}[!ht]
\begin{center}
\begin{tabular}{|c|ccclp{7.7cm}|}
 \hline
 no. & $n$ & $k$ & $v$ & $z_C$ & $(\alpha_{n+1},\alpha_n, \ldots, \alpha_1)$
 \\ \hline \hline
 1 & 2 & 25 & 51 & -0.00051277 & $(49.799,   49.799,   80.402)^\circ$ 
 \\ \hline
 2 & 3 & 8 & 25 & -0.0061413 & $(30.273, 30.273, 46.543, 72.912)^\circ$ 
 \\ \hline
 3 & 4 & 5 & 21 & -0.015354 & $(19.716, 19.716, 29.875, 44.519, 66.173)^\circ$ 
 \\ \hline
 4 & 5 & 4 & 21 & -0.029972 & $(13.494, 13.494, 20.336, 29.781, 43.215, 59.680)^\circ$ 
 \\ \hline
 5 & 7 & 3 & 22 & -0.042695 & $( 7.1815, 7.1815, 10.7864, 15.6392, 22.1409,$ $30.9129, 43.0793, 43.0788)^\circ$ 
 \\ \hline
 6 & 5 & 2$^\ast$ & 11 & -0.017984 & $( 13.201, 13.201, 19.890, 29.110, 42.172, 62.427)^\circ$ 
 \\ \hline
\end{tabular}
\vskip 5mm 
\caption{List of some mono-monostatic 0-skeletons $P_{n,k}$ with $D_k$ symmetry and $v = nk+1$ vertices; $z_c$ can be verified via (\ref{eq:z_prodsum}). $k=2$ marked by `$\ast$' is the two-dimensional case obtained from (\ref{eq:z_C}) and (\ref{eq:z_prodsum}) via numerical optimization. The minimum number of vertices for monostatic 3D rotational polyhedra is 21.}
\label{tb:monomono}
\end{center}
\end{table}
\end{proof}

We believe that this construction  is close to a (local) optimum, i.e., we think that this may be the mono-monostatic 0-skeleton defined by Conway $(n,k)$-spirals which has the least number of vertices. This, however, does not exclude the existence of mono-monostatic polyhedral 0-skeletons with smaller number of vertices which have less symmetry. Our construction provides 21 as an \emph{upper bound} for the minimal number of vertices and faces of a mono-monostatic polyhedral 0-skeleton. The lower bound for the number of vertices was given in \cite{Bozoki} as 8, from which a lower bound of 6 for the number of faces follows \cite{Steinitz1}.

\section{Connection to related other problems.}\label{other}
\subsection{Mechanical complexity of polyhedra.}
It is apparent that constructing monostatic polyhedra is not easy. In \cite{balancing} this general observation was formalized by introducing the \emph{mechanical complexity} $C(P)$ of a polyhedron $P$ as
\begin{equation}\label{complexity}
 C(P) = 2(V(P) + F(P) - S(P) - U(P)),
\end{equation}
where $V(P),F(P),S(P),U(P)$ stand for the number of vertices, faces, stable and unstable equilibrium points of $P$, respectively.
As described after (\ref{Poincare}), the equilibrium class of polyhedral solids with given numbers $S,U$ of stable and unstable equilibria is denoted by $(S,U)^E$ and the complexity of such class was defined in \cite{balancing} as
\begin{equation}\label{complexity1}
C(S,U)=\min \{C(P): P \in (S,U)^E\};
\end{equation}
however, the only material distribution considered in \cite{balancing} was uniform density. Here we generalize this concept for $h$-skeletons (see Definition \ref{def:skeleton}). To distinguish in the notation,
we will apply an upper index, and denote by $C^h(S,U)$ the  complexity of the equilibrium class $(S,U)^E$ among $h$-skeletons. (To match earlier notation, in the case of homogeneous polyhedra the upper index 3 will be omitted).

In the case of homogeneous polyhedra, the complexity for all non-monostatic equilibrium classes $(S,U)^E$ for $S,U>1$ has been computed in \cite{balancing}. On the other hand, the complexity has not yet been determined for any of the monostatic classes $(1,U)^E, (S,1)^E$. Lower and upper bounds exist for $C(S,1),C(1,U)$ for $S,U>1$. The most difficult appears to be the mono-monostatic class $(1,1)^E$ for the complexity $C(1,1)$ of which  the prize USD $1,000,000/C(1,1)$ has been offered in \cite{balancing}. Not only is $C(1,1)$  unknown, but also, at this point, there is no known upper bound either.

\subsection{Complexity of some monostable and mono-unstable polyhedral 0-skeletons.}\label{ss:complexity}
Admittedly, computing upper bounds for 0-skeletons is easier.
This is already apparent in the planar case, where monostatic objects with homogeneous mass distribution in the interior do not exist \cite{DomokosRuina}, whereas a monostatic polyhedral 0-skeleton could be constructed with $V=11$ vertices (see Proposition \ref{prop1} using the ideas of \cite{Dawson}). In 3D,  our construction of a 0-skeleton with $F=21$ faces and $V=21$ vertices (see the top left polyhedron in Figure~\ref{fig:compl} and numerical data in Table~\ref{tb:monomono}, line 3) offers such an upper bound as
\begin{equation}\label{bound}
C^0(1,1)\leq 2(21+21-1-1) = 80.
\end{equation}
This is the first known such construction and its existence may help to solve the more difficult cases, in particular,
the case with uniform density. In  Figure~\ref{fig:compl} we provide an illustration for upper bounds for the complexity of polyhedral 0-skeletons in some other monostatic equilibrium classes as well.

\begin{remark}
As we have seen, constructing mono-monostatic 0-skeletons appears to be an easier task as compared to the construction of homogeneous mono-monostatic polyhedra. This can also be expressed by saying that we expect the mechanical complexity of the latter to be much larger than the mechanical complexity of the former. The task of construction of mono-monostatic polyhedra appears to be even easier with \emph{arbitrary mass distribution}. While we do not know the mechanical complexity associated with this class, an upper bound is readily provided by using Conway $(n,k)$-spirals. The latter have at least 7 faces and 7 vertices (as $n\geq 2, k\geq 3, V=F=nk+1$) and the Conway $(2,3)$-spiral, with center of mass at $x_c=y_c=0, z_c<0$ is indeed an example of  mono-monostatic polyhedron. This gives an upper bound of 24 for the mechanical complexity of mono-monostatic polyhedra with arbitrary mass distribution.
\end{remark}

\begin{figure}[ht]
\begin{center}
\includegraphics[width=\textwidth]{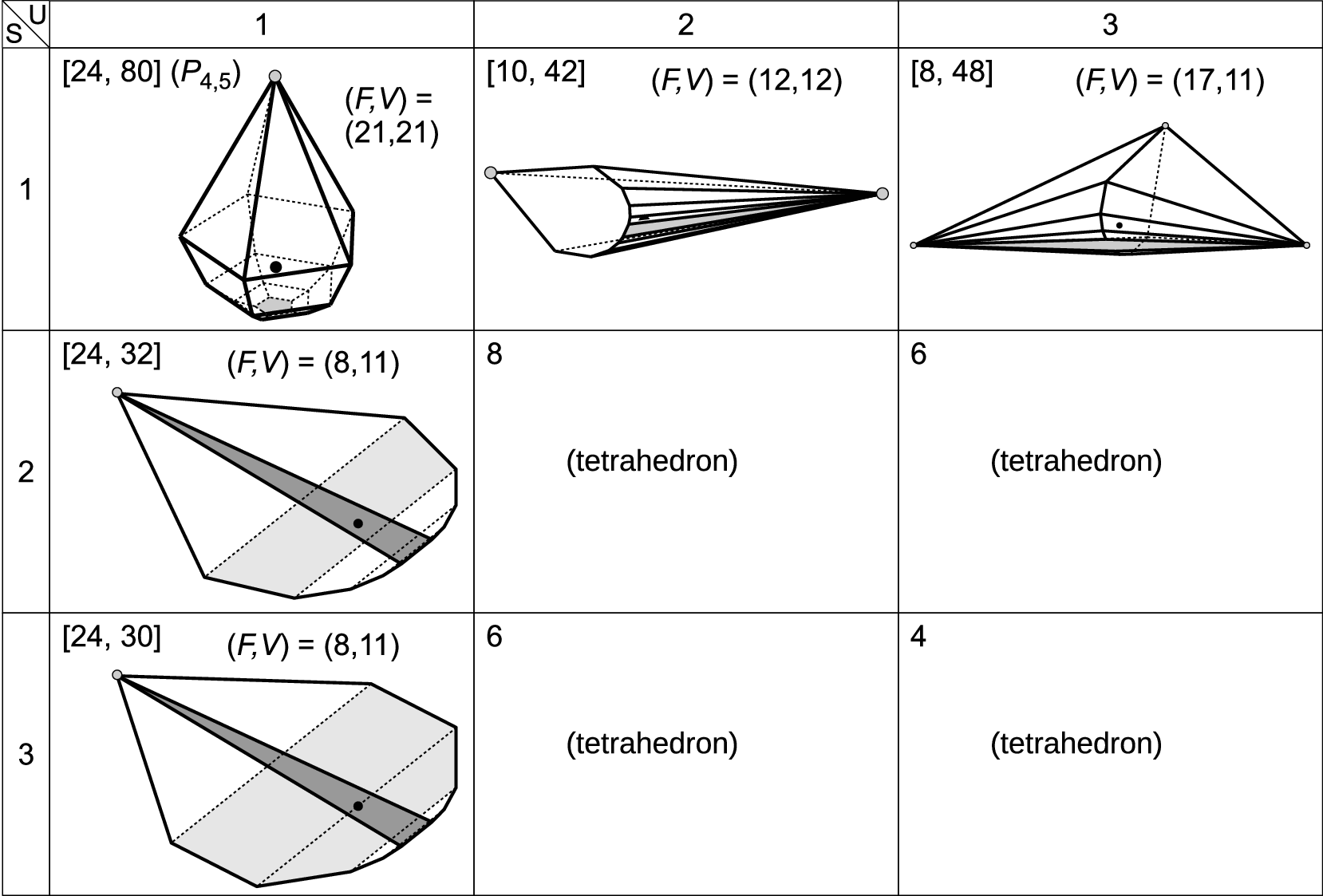}
\caption{Complexity of some monostable and mono-unstable polyhedra. Drawn representatives of equilibrium classes $(S,U)^E$ provide an upper bound for complexity of the respective class, see the bracketed numbers as lower and upper bounds, respectively, in the top left corner of their cells. Since mono-unstable polyhedra with less than 8 vertices (and therefore, by Steinitz's theorem, with less than 6 faces) cannot exist, $26-2S$ is a lower bound of complexity of classes $(S,1)^E$. Complexity of the four non-monostatic classes is exactly known by the existence of simplicial representatives of each class \cite{balancing}. Coordinates of drawn polyhedra, except for the one in class $(1,1)^E$, are given in Table~\ref{tb:coord}.}
\label{fig:compl}
\end{center}
\end{figure}

\subsection{Non-existence of mono-monostatic homogeneous polyhedra generated by Conway $(n,k)$-spirals.}\label{ss:existence}

First we briefly discuss the planar case. In two dimensions we proved (Proposition \ref{prop1}, see also Figure~\ref{fig:intro}b) that double Conway spirals, interpreted as 0-skeletons, can be mono-monostatic. However, it is known from \cite{DomokosRuina} that no homogeneous, convex mono-monostatic objects exist in two dimensions, and this implies that double Conway spirals interpreted as homogeneous objects may never be mono-monostatic. 

Now we show that in three dimensions the situation is similar: we proved (Theorem \ref{th1}) that 0-skeletons generated by Conway $(n,k)$-spirals may be mono-monostatic. Next we show that homogeneous polyhedra generated by  Conway $(n,k)$-spirals  can never be mono-monostatic.

\begin{theorem}
\label{thm:nonexst}
Let $P$ be a convex solid with center of mass at $c$. Let $a$ denote a straight line intersecting $P$ and containing a point $O$, and let $h(a)$ be a half-plane the boundary of which is $a$. Let $N$ denote the intersection of $P$ and $h(a)$ and let us describe $N$ as the polar distance $r(\theta)$, measured from $O$ as origin.

If there exists a straight line $a$ such that $r(\theta)$ is strictly monotonic for all possible $h(a)$ then $c$ cannot coincide with $O$.
\end{theorem}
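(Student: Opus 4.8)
\textit{Proof plan.} The approach is to translate static balance into critical-point language and then to confront the resulting configuration with the defining property of the center of mass. Writing $R(x)=|x-C|$ for $x\in\partial P$, a boundary point is an equilibrium exactly when $x-C\perp T_x\partial P$, i.e.\ when $x-C$ is parallel to the outer normal $\nu(x)$; stable, unstable and saddle equilibria are the local minima, maxima and saddles of $R$. Since each slice is described in polar coordinates with pole $C$, the center of mass lies on $a$ (this also follows from the symmetry of the intended constructions), so $C$ and the two points $P_N,P_S$ where $a$ meets $\partial P$ are collinear. First I would record the slice-wise reduction: a non-polar equilibrium $x$ lies in a unique half-plane $h(a)$, and because $x-C$ and the tangent to $\partial N$ both lie in that plane, the balance condition restricted to the slice is precisely $r'(\varphi)=0$ (in polar coordinates, radius $\perp$ curve is equivalent to $r'=0$). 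Strict monotonicity of every $r(\varphi)$ therefore excludes all non-polar equilibria, confining each equilibrium of $P$ to the two poles $P_N,P_S$ on $a$.

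The delicate point is what to do with this confinement, and here a purely topological count is \textit{not} enough: two equilibria, one minimum and one maximum with no saddle, would by the Poincar\'e--Hopf relation $S-(\text{saddles})+U=2$ read as the mono-monostatic case $(S,U)=(1,1)$, i.e.\ the count points the wrong way. The decisive extra ingredient is that $C$ is the genuine center of mass. Placing the origin at $C$ and using spherical coordinates adapted to $a$ (polar angle $\varphi$ from $P_N$, azimuth $\theta$), the vanishing of the first moment along $a$ is the identity
\[ \int_0^{2\pi}\!\!\int_0^{\pi}\sin 2\varphi\;r(\varphi,\theta)^4\,d\varphi\,d\theta=0, \]
which expresses that the $a$-component of $C$ is correctly located. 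I would then show this identity is incompatible with strict monotonicity: the weight $\sin 2\varphi$ is positive on $(0,\pi/2)$ and negative on $(\pi/2,\pi)$, while the common pole radii $r(0,\theta)=|P_N-C|$ and $r(\pi,\theta)=|P_S-C|$ force every slice to be monotone in the \textit{same} sense; pairing $\varphi$ with $\pi-\varphi$ then makes each inner integral strictly of one sign, so the double integral cannot vanish. Hence no convex body whose center of mass lies on $a$ can have all of its $a$-slices strictly monotone; in particular the equilibrium configuration forced above is unrealizable, so the monotone hypothesis cannot coexist with mono-monostaticity and $P$ is not mono-monostatic.

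The main obstacle is exactly this reconciliation. Because the naive critical-point bookkeeping suggests the opposite conclusion, the proof must extract quantitative, mass-dependent information rather than mere counting, and the quartic-moment identity is where that happens; getting its sign right under monotonicity, and correctly handling the endpoint (pole) contributions, is the crux of the argument.

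Two technical hurdles accompany it. First, the bodies of interest are polyhedra, so $\partial P$ and the slice curves $r(\varphi)$ are only piecewise smooth; I would replace $r'(\varphi)=0$ by the normal-cone version of the balance condition at edges and vertices and interpret ``strictly monotonic'' so that it still forbids interior equilibria. Second, the behaviour at the poles must be controlled, since strict monotonicity permits $r'(0^{+})\neq 0$, i.e.\ a conical apex on $a$; one must verify that such an apex does not introduce an equilibrium of the wrong index that would spoil the pairing, and that its contribution to the moment identity does not restore the vanishing. Justifying $C\in a$ and making the endpoint analysis rigorous are the steps I expect to require the most care.
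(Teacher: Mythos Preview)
Your core argument is correct and is essentially the paper's proof: the quartic first-moment identity $\int_0^{2\pi}\!\int_0^{\pi} r^4\sin 2\varphi\,d\varphi\,d\theta=0$ followed by the $\varphi\leftrightarrow\pi-\varphi$ pairing to produce a strictly one-signed integrand, contradicting $C$ being the center of mass. The paper omits your critical-point/Poincar\'e--Hopf preamble and the piecewise-smooth and pole caveats entirely, since the moment identity alone already shows the monotonicity hypothesis cannot hold for \emph{any} homogeneous convex body; $C\in a$ is implicit in the theorem's polar setup (polar distance from $C$ in every half-plane through $a$) rather than something requiring separate justification.
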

\begin{proof}
Let an axis $z$ be directed along $a$ and let a point $Q$ on the surface of $P$ be parametrized as $Q(\theta, \varphi)$ where $0 \leq\theta\leq \pi$ is the meridian angle between $OQ$ and $z$, $0\leq\varphi< 2\pi$ is the azimuth angle (with respect to a fixed starting position), and let  $r(\theta, \varphi)=\lvert Q-O \rvert$. 
Since $P$ is convex, $r = r(\theta, \varphi)$ for all surface points is uniquely defined. In this polar system, $c$ can only be the centre of mass of $P$ if
the first static momentum is balanced:
\begin{equation}
\label{eq:Cheight}
    \int\limits_{0}^{2\pi} \int\limits_{0}^{\pi} \dfrac{2}{9} r(\theta, \varphi)^4 \sin\theta \cos\theta d\theta d\varphi = 0,
\end{equation}
where $(1/3)r^3 \sin\theta d\theta d\varphi$ is the volume of an infinitesimal pyramid with its apex at $c$ and $(2/3)r\cos \theta$ measures the $z$ coordinate for the centre of mass of an infinitesimal pyramid.
By the condition of the theorem, $r$ is strictly monotonic in $\theta$. Let us assume that $r$ is strictly monotonically decreasing, i.e., that
$\theta_1 < \theta_2 \iff r_1 > r_2$ for any fixed $\varphi$ and rewrite (\ref{eq:Cheight}) as follows:
\begin{equation*}
    \dfrac{2}{9} \int\limits_{0}^{2\pi} \int\limits_{0}^{\pi/2} \left(r(\theta, \varphi)^4 \sin\theta \cos\theta + r(\pi-\theta, \varphi)^4 \sin(\pi-\theta) \cos(\pi-\theta) \right) d\theta d\varphi = 0
\end{equation*}
\begin{equation}
    \dfrac{1}{9} \int\limits_{0}^{2\pi} \int\limits_{0}^{\pi/2} \left(r(\theta, \varphi)^4 - r(\pi-\theta, \varphi)^4 \right) \sin 2\theta d\theta d\varphi = 0.
\end{equation}
Here both terms of the product in the integrand are positive, so the definite integral cannot evaluate to zero.
\end{proof}

\begin{corollary}
Homogeneous polyhedra generated by Conway $(n,k)$-spirals are never mono-monostatic.
\end{corollary}

\begin{proof}
We prove the Corollary by showing that a Conway $(n,k)$-spiral has either more than two equilibrium points (thus it is not mono-monostatic) or, it satisfies the monotonicity condition of the theorem with $O \equiv c$ (thus it is not mono-monostatic either). Consider $a$ to be aligned with axis $z$. We will refer to planes containing the $z$ axis as \emph{central vertical planes} and intersections of the Conway $(n,k)$-spiral with central vertical planes as \emph{central vertical sections}. The $r=\mbox{\textit{constant}}$ lines are (parts of) concentric circles on all faces. We will treat the single horizontal face ($k$-gon at the bottom) and the $nk$ non-horizontal faces ($k(n-1)$ quadrangles and $k$ triangles) separately. On the horizontal face, the perpendicular projection of $c$ is incident to the $z$-axis, so $r$ increases monotonically along any central vertical section within that face. On each non-horizontal face $f$, there are two, mutually exclusive possibilities: (A) there exists an $r=\mbox{\textit{constant}}$ line which is tangent to  a central vertical plane or (B) there is no such $r=\mbox{\textit{constant}}$ line. If (A) holds then there will be at least one equilibrium point in the interior or on the boundary of $f$. If (B) holds then $r$ will be strictly monotonic on $f$ along any central vertical section.
If (A) is true for \emph{any} non-horizontal face then the Conway $(n,k)$-spiral is not mono-monostatic. If (A) is not true for any of the non-horizontal faces then (B) is true for all of them, so $r$ will be strictly monotonic globally, which, by Theorem {\ref{thm:nonexst}} implies that the Conway $(n,k)$-spiral is not mono-monostatic.
\end{proof}

Theorem~\ref{thm:nonexst} also implies the following
\begin{corollary}
Let $K$ be a homogeneous, smooth convex body with revolution symmetry. Then $K$ cannot be mono-monostatic.
\end{corollary}
\begin{proof}

Let $z$ be the axis of revolution symmetry, let $r(\theta, \varphi)$ be the radial distance function defining the boundary of $K$, measured from the center of mass $c$, and let $0 \leq\theta\leq \pi$ be the meridian angle with respect to the $z$ axis, thus the meridian of $K$ is defined
by $r(\theta)$, $\theta \in [0,\pi]$. Due to symmetry, we have two equilibria at the poles with $\theta=0, \theta =\pi$ for which $r(0) \leq r(\pi)$ can be assumed. 
Let $r(\theta)$ have $n$ stationary points in the interior of $[0,\pi]$. If $n>0$ 
then there are $n$ \emph{rings} of degenerate equilibria, so $K$ is not mono-monostatic.
If $n=0$ then $r(\theta)$ is strictly monotonic and this is the situation described in Theorem \ref{thm:nonexst} and thus $K$ is not mono-monostatic.
So $K$ cannot be mono-monostatic for any value of $n$.
\end{proof}

\section{Concluding comments.}\label{sum}
In this paper, by relying on the geometric idea of Conway spirals,  we demonstrated the existence of mono-monostatic 0-skeletons in two and three dimensions. In the former case, by drawing on an earlier result of Dawson \cite{Dawson} we showed
that mono-monostatic planar 0-skeletons with $V=11$ vertices exist. It follows from another result of Dawson \cite{Dawson3} that for $V=9$ such constructions cannot exist. The $V=10$ case is not known.  In three dimensions we showed an explicit construction with $V=21$ vertices,
thus providing an upper bound for the minimal number of vertices. The lower bound is $V=8$ \cite{Bozoki} and other results are not known.
We hope that these constructions will motivate further research to find the minimal number of $V$ for a mono-monostatic 0-skeleton, both in two and three dimensions.

Conway $(n,k)$-spirals played a central role in this article. It might be of interest to note that for odd values of $k$,
such a polyhedron is combinatorially equivalent to a \emph{strongly self-dual polyhedron} \cite{Ghorvath}, \cite{Lovasz}.

\begin{table}[ht]
\begin{center}
\begin{scriptsize}
    \begin{tabular}{|r|r|r|}
    \hline
    \multicolumn{3}{|c|}{$(1,2)^S$}\\
    $x$ & $y$ & $z$ \\
    \hline
     0 & 374 & 0 \\
    154 & 80 & 0 \\
    124 & -32 & 0 \\
    81 & -78 & 0 \\
    47 & -95 & 0 \\
    24 & -100 & 0 \\
    -24 & -100 & 0 \\
    -47 & -95 & 0 \\
    -81 & -78 & 0 \\
    -124 & -32 & 0 \\
    -154 & 80 & 0 \\
    0 & -1200 & 5000 \\
    \hline
    \end{tabular}    
    \begin{tabular}{|r|r|r|}
    \hline
    \multicolumn{3}{|c|}{$(1,3)^S$}\\
    $x$ & $y$ & $z$ \\
    \hline
     0 & 466 & 0 \\
    166 & 70 & 0 \\
    121 & -47 & 0 \\
    71 & -87 & 0 \\
    35 & -100 & 0 \\
    -35 & -100 & 0 \\
    -71 & -87 & 0 \\
    -121 & -47 & 0 \\
    -166 & 70 & 0 \\
    0 & -100 & -900 \\
    0 & -100 & 900 \\
    \hline
    \multicolumn{3}{c}{\phantom{0} }
\end{tabular}
\vskip 5mm
    \begin{tabular}{|r|r|r|}
    \hline
    \multicolumn{3}{|c|}{$(2,1)^S$}\\
    $x$ & $y$ & $z$ \\
    \hline
    0 & 374.328 & 0 \\
    153.589 & 80.2023 & 20 \\
    124.268 & -32.3675 & 14.9819 \\
    81.1006 & -77.5258 & 8.45141 \\
    46.9121 & -94.4981 & 3.41302 \\
    23.4562 & -100 & 0 \\
    -23.4562 & -100 & 0 \\
    -46.9121 & -94.4981 & 3.41302 \\
    -81.1006 & -77.5258 & 8.45141 \\
    -124.268 & -32.3675 & 14.9819 \\
    -153.589 & 80.2023 & 20 \\
    \hline
    \multicolumn{3}{c}{\phantom{0} }
    \end{tabular}    
    \begin{tabular}{|r|r|r|}
    \hline
    \multicolumn{3}{|c|}{$(3,1)^S$}\\
    $x$ & $y$ & $z$ \\
    \hline
    0 & 334.907 & 0 \\
    145.019 & 83.7267 & 10 \\
    145.019 & 0 & 9.6018 \\
    94.9161 & -68.9606 & 5.40618 \\
    53.5898 & -92.8203 & 2.10256 \\
    26.7949 & -100 & 0 \\
    -26.7949 & -100 & 0 \\
    -53.5898 & -92.8203 & 2.10256 \\
    -94.9161 & -68.9606 & 5.40618 \\
    -145.019 & 0 & 9.6018 \\
    -145.019 & 83.7267 & 10 \\
    \hline
    \multicolumn{3}{c}{\phantom{0} }
    \end{tabular}    
\end{scriptsize}
\caption{Coordinates of some polyhedra shown in Figure~\ref{fig:compl}. Monostable objects are provided with integer coordinates which would be difficult for mono-unstable ones due to oblique polygonal faces.}
\label{tb:coord}
\end{center}
\end{table}

\vfill
\end{document}